\newtheorem{theorem}{Theorem}
\newtheorem{atheorem}{Theorem}
\newtheorem{lemma}{Lemma}
\theoremstyle{definition}
\newtheorem*{defn}{Definition}
\newtheorem{example}{Example}
\newtheorem*{remark}{Remark}
\newcommand{\RR}{\mathbb{R}}
\newcommand{\ZZ}{\mathbb{Z}}
\newcommand{\eps}{\varepsilon}
\newcommand{\df}{d} 
\newcommand{\cnv}[2]{\mathrm{Cnv}(#1,#2)}
\newcommand{\ba}[1]{\begin{array}{#1}}
\newcommand{\ea}{\end{array}}
\newcommand{\beq}[1]{\begin{equation}\label{#1}}
\newcommand{\eeq}{\end{equation}}
\title{Existence of convolution maximizers in $L_p(\RR^n)$ for kernels from Lorentz spaces}
\author{Sergey Sadov\footnotemark[1]\footnote{E-mail: serge.sadov@gmail.com}}
\date{}
\begin{document}
\maketitle

\begin{abstract}
The paper extends an earlier result of G.V.~Kalachev and the author (Sb. Math. 2019) on the existence of a maximizer of convolution operator acting between two Lebesgue spaces on $\RR^n$ with kernel from some $L_q$, $1<q<\infty$. In view of Lieb's result of 1983 about the
existence of an extremizer for the Hardy-Littlewood-Sobolev inequality it is natural to ask whether a convolution maximizer exists for any kernel from
weak $L_q$. The answer in the negative was given by Lieb in the above citation. In this paper we prove the existence of maximizers for kernels from a slightly more narrow class than weak $L_q$, which contains all Lorentz spaces $L_{q,s}$ with $q\leq s<\infty$.  

\medskip
Keywords: convolution, existence of extremizer, weak $L_p$ space, tight sequence, Hardy-Littlewood-Sobolev inequality, best constants.

\medskip
MSC: 
44A35,  
46E30, 
41A44. 
\end{abstract}

\section{Introduction}
\label{sec:intro}

In this paper we extend the main result of \cite{KS2019}.
We will mostly follow \cite{KS2019} in notation and terminology. 

Let $\mathcal{S}(\RR^n)$ be the Schwarz space and
let $k\in \mathcal{S'}(\RR^n)$, i.e. $k$ is a tempered distribution in $\RR^n$. We 
write, following
\cite{Stepanov1984},%
\footnote{corresponds to $L_p^r$ in \cite{Hormander1960}}
$k\in \cnv{p}{r}$,  
if the convolution operator
$K_k:\,f\mapsto k*f$ defined initially for $f\in\mathcal{S}(\RR^n)$ extends continuously to the map from $L_p(\RR^n)$ to 
$L_q(\RR^n)$,
that is,
$$
 \|k*f\|_{L_q(\RR^n)}\leq C\|f\|_{L_p(\RR^n)}
$$ 
for any $f\in \mathcal{S}(\RR^n)$ with $C$ independent of $f$.

In the sequel we write $\|f\|_p$ instead of 
$\|f\|_{L_p(\RR^n)}$.

We address the question of the existence of a maximizer for the operator $K_k$.
A maximizer is any function $f$ with $\|f\|_p=1$ such that
$$
 \|k*f\|_{q}=\|K_k\|\;(=\|K_k\|\,\|f\|_p).
$$ 

Convolution operators considered in this paper
will have kernels from $\cnv{p}{r}$ where $1<p<r<\infty$.
(In \cite{KS2019}, the exponent of the target space was denoted $r'$.)

If $k\in L_q$ and the triple $(p,q,r)$ satisfies the Young condition
\beq{Y}
 \frac{1}{p}+\frac{1}{q}=1+\frac{1}{r},
\eeq
then $k\in \cnv{p}{r}$ by Young's convolution inequality. 

A weaker sufficient condition is provided by the Hardy-Littlewood-Sobolev theorem: $k\in \cnv{p}{r}\,$ if 
\eqref{Y} holds and $k\in L_{q,\infty}$, the weak $L_q$ space,
see e.g.\ \cite[Theorem~1.4.24]{GrafakosCFA}.%
(We recall the relevant definitions in Sec.~\ref{sec:results}).
The norm of the operator $K_k$ is estimated by the weak $q$-norm of the kernel $k$; this important fact 
follows from the Hardy-Littlewood-Sobolev inequality (see e.g. \cite[Theorem~4.5.3]{HormanderPDE1} or \cite[Theorem~4.3]{LiebLoss}) and the Riesz rearrangement inequality for convolutions \cite[Theorem~3.7]{LiebLoss}. 

The main result of \cite{KS2019}, sharpening the earlier result of Pearson \cite{Pearson1999} (by removing extraneous assumptions), is

\begin{atheorem}
\label{athm:KS}
If $p,q,r$ are related by \eqref{Y}, $1<p<r<\infty$,
 and $k\in L_q$,
then the operator $K_k:\;L_p\to L_r$ has a maximizer.
\end{atheorem}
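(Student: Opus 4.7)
The plan is to prove existence of a maximizer by a Lions-type concentration-compactness argument applied to a maximizing sequence, exploiting the translation invariance of $\|K_k\|$, the strict superadditivity arising from $r>p$, and the density of $C_c(\RR^n)$ in $L_q$.

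Let $(f_n)\subset L_p$ be a maximizing sequence with $\|f_n\|_p=1$ and $\|k*f_n\|_r\to C:=\|K_k\|$. Because $K_k$ commutes with translations, replacing $f_n$ by $f_n(\cdot-a_n)$ preserves the maximizing property. I apply the concentration-compactness alternative to the probability measures $d\mu_n=|f_n|^p\,dx$: after passing to a subsequence, exactly one of the following occurs: \emph{vanishing}, $\sup_{y\in\RR^n}\mu_n(B_R(y))\to 0$ for every $R>0$; \emph{dichotomy}, $f_n=g_n+h_n+\rho_n$ with $\|\rho_n\|_p\to 0$, $\mathrm{dist}(\mathrm{supp}\,g_n,\mathrm{supp}\,h_n)\to\infty$, and $\|g_n\|_p^p\to\lambda\in(0,1)$; or \emph{tightness} after translation.

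To rule out vanishing, given $\varepsilon>0$ I pick $k_\varepsilon\in C_c(\RR^n)$ with $\|k-k_\varepsilon\|_q<\varepsilon$; Young gives $\|(k-k_\varepsilon)*f_n\|_r\leq C_0\varepsilon$. With $k_\varepsilon$ supported in some $B_R$ and bounded, Hölder yields $|k_\varepsilon*f_n(x)|\leq\|k_\varepsilon\|_{p'}\,\mu_n(B_R(x))^{1/p}\to 0$ uniformly in $x$; a second Young bound at an exponent $s<r$ (available since $k_\varepsilon$ lies in every $L_t$) gives $\|k_\varepsilon*f_n\|_s$ uniformly bounded, and interpolation yields $\|k_\varepsilon*f_n\|_r\to 0$, whence $\|k*f_n\|_r\to 0$, contradicting $C>0$. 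To rule out dichotomy, the same approximation shows that the supports of $k*g_n$ and $k*h_n$ become essentially disjoint as $n\to\infty$, so $\|k*f_n\|_r^r=(1+o(1))\bigl(\|k*g_n\|_r^r+\|k*h_n\|_r^r\bigr)\leq C^r\bigl(\lambda^{r/p}+(1-\lambda)^{r/p}\bigr)+o(1)$. Since $r>p$, the exponent $r/p$ exceeds $1$ and $\lambda^{r/p}+(1-\lambda)^{r/p}<1$ on $(0,1)$, contradicting $\|k*f_n\|_r\to C$.

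Thus tightness holds: after translation $|f_n|^p$ is tight, and since $1<p<\infty$ a subsequence satisfies $f_n\rightharpoonup f$ weakly in $L_p$. For the strong convergence $k*f_n\to k*f$ in $L_r$, I again write $k=k_\varepsilon+(k-k_\varepsilon)$; the tail contributes $O(\varepsilon)$ in $L_r$ uniformly in $n$ by Young. Since $k_\varepsilon$ is bounded and compactly supported, $k_\varepsilon\in L_{p'}$, so $(k_\varepsilon*f_n)(x)=\langle k_\varepsilon(x-\cdot),f_n\rangle\to(k_\varepsilon*f)(x)$ pointwise; tightness of $f_n$ essentially confines $k_\varepsilon*f_n$ to a bounded region (up to small $L_r$-error), and Vitali's theorem promotes pointwise to $L_r$ convergence. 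Letting $\varepsilon\to 0$ yields $k*f_n\to k*f$ in $L_r$, so $\|k*f\|_r=C$; combined with weak lower semicontinuity $\|f\|_p\leq 1$ and $C=\|K_k\|\,\|f\|_p$, this forces $\|f\|_p=1$, so $f$ is a maximizer. The main technical obstacle is the dichotomy step: one must show that two well-separated pieces of $f_n$ interact negligibly under the translation-invariant $K_k$, which relies crucially on approximating $k$ in the full $L_q$-norm by compactly supported kernels—a feature unavailable for weak $L_q$ kernels and consistent with Lieb's negative result mentioned in the introduction.
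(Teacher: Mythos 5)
Your proof is correct in its essentials, and it reaches the result by the same three pillars the paper (via \cite{KS2019}) relies on: approximating $k\in L_q$ by bounded, compactly supported kernels with small operator-norm error (Young's inequality), using $r>p$ to forbid the splitting of mass in a maximizing sequence, and passing from weak $L_p$-convergence to strong $L_r$-convergence of the images. The packaging, however, is genuinely different. You invoke Lions' concentration-compactness trichotomy and then exclude vanishing and dichotomy separately; the exclusion of dichotomy rests on the strict subadditivity $\lambda^{r/p}+(1-\lambda)^{r/p}<1$. The paper instead proves tightness \emph{directly and quantitatively}: its Lemma~3.6/3.7 machinery shows that every $\eps$-maximizer has $\delta$-diameter $D^p_{\delta,v}(f)\leq 8R\,\eps_1^{-p/r}$ in every direction, with explicit constants in which the factor $1-2^{1-r/p}$ plays exactly the role of your worst-case ($\lambda=1/2$) subadditivity gap. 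The quantitative route yields effective bounds on approximate maximizers and avoids appealing to the concentration-compactness lemma as a black box; your route is shorter to state and makes the mechanism (why $r>p$ matters, why $L_q$-approximability matters, and why weak-$L_q$ kernels escape the argument) more transparent. Two small points you should tidy up: Lions' dichotomy alternative gives the decomposition $f_n=g_n+h_n+\rho_n$ only approximately for each fixed tolerance, so a diagonal extraction is needed to get $\|\rho_n\|_p\to0$; and in the dichotomy step it is only $k_\eps*g_n$ and $k_\eps*h_n$ whose supports separate --- the full convolutions $k*g_n$, $k*h_n$ need not have disjoint supports, so the near-orthogonality in $L_r$ carries an $O(\eps)$ error that must be sent to zero after the limit in $n$, which is what you implicitly do. Neither point is a gap, only a matter of writing the limits in the right order.
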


The ``boundary'' cases $p=1$, $r=\infty$, and $p=r$
are discussed in \cite{KS2019}, too. The full analysis
of the case $p=q=r=1$ is given in \cite{KS2021}.

The present paper is motivated by a desire to connect Theorem~\ref{athm:KS} with another maximizer existence result due to Lieb \cite{Lieb1983}, see also \cite[Sec.~4.8]{LiebLoss}:

\begin{atheorem}
\label{athm:L}
If $p,q,r$ are related by \eqref{Y}, $1<p<r<\infty$,
and
$$
 h(x)=|x|^{-n/q},
$$
then the operator $K_h:\;L_p(\RR^n)\to L_r(\RR^n)$ has a maximizer.
\end{atheorem}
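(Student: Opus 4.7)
The plan is to combine the rearrangement technique with a concentration--compactness argument that exploits the special symmetries of the kernel $h(x)=|x|^{-n/q}$. Fix a maximizing sequence $(f_m)$ with $\|f_m\|_p=1$ and $\|h*f_m\|_r\to\|K_h\|$. Because $h$ equals its own symmetric decreasing rearrangement, the Riesz rearrangement inequality lets one replace each $f_m$ by $f_m^*$ without decreasing $\|h*f_m\|_r$, so one may assume the $f_m$ are nonnegative, radially symmetric, and nonincreasing in $|x|$.

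The next step is scale normalization. The Young relation $1/p+1/q=1+1/r$ encodes precisely that $h$ is homogeneous of degree $-n/q$, so the dilation $T_\lambda f(x)=\lambda^{n/p}f(\lambda x)$ preserves both $\|f\|_p$ and $\|h*f\|_r$. Choose $\lambda_m$ and replace $f_m$ by $T_{\lambda_m}f_m$ in order to pin the scale, e.g.\ arrange $\mu(\{f_m>1\})=1$. By Helly's selection principle applied to the radial monotone profiles, a subsequence converges a.e.\ to a nonnegative, radially nonincreasing $f$ that, by the normalization, is not identically zero; on this subsequence one also has weak convergence in $L_p$.

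The main obstacle is to promote this $f$ to an actual maximizer rather than a ``leaky'' weak limit. The tightness argument used in Theorem~\ref{athm:KS} relied on $k\in L_q$ and is unavailable here, since $h\in L_{q,\infty}\setminus L_q$, so a new ingredient is needed. That ingredient is the \emph{conformal} invariance of $K_h$: beyond translations and dilations, $|x-y|^{-n/q}$ transforms well under the inversion $x\mapsto x/|x|^2$, reflecting the invariance of the HLS inequality under the full M\"obius group of $\RR^n\cup\{\infty\}$. I would transport the problem to $S^n$ via stereographic projection, where the extremal problem lives on a compact manifold and enjoys a continuous family of extra symmetries; a refined Brezis--Lieb split applied to both $f_m$ and $h*f_m$ then shows that the defects $f_m-f$ and $h*(f_m-f)$ must themselves saturate the HLS inequality in the limit, which together with the normalization forces $\|f\|_p=1$ and hence $\|h*f\|_r=\|K_h\|$. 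As a fall-back route I would keep Lieb's method of competing symmetries in mind, alternating the symmetric decreasing rearrangement on $\RR^n$ with the inversion-conjugated rearrangement to manufacture convergence of the iterates directly to an extremizer.
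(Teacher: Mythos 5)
First, a point of order: the paper does not prove Theorem~\ref{athm:L} at all --- it is quoted from \cite{Lieb1983} purely as motivation --- so your attempt can only be judged against Lieb's own argument. The first half of your proposal does match it: symmetric decreasing rearrangement via the Riesz inequality, dilation normalization (the exponent count $n(1/p+1/q-1-1/r)=0$ is exactly \eqref{Y}, so $T_\lambda$ indeed preserves both norms), and Helly selection on the radial profiles are all steps in Lieb's proof.

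The gap is in your central compactness ingredient. Conformal invariance of the bilinear form $\int\!\!\int f(x)|x-y|^{-n/q}g(y)\,dx\,dy$ with $g\in L_{r'}$ under inversions --- and hence the transplantation to $S^n$ via stereographic projection and the competing-symmetries scheme --- is available only in the diagonal case $p=r'$, i.e.\ $1/p+1/r=1$. For general $1<p<r<\infty$ satisfying \eqref{Y} the only symmetries are translations and dilations; this is precisely why the sharp HLS constant is known only on the diagonal, while Lieb's existence proof for general exponents must, and does, avoid conformal invariance. What actually closes the argument is elementary and stays on $\RR^n$: a radial nonincreasing $f_m$ with $\|f_m\|_p=1$ satisfies $f_m(x)\leq (|B_1|\,|x|^{n})^{-1/p}$, which combined with the scale normalization rules out a vanishing weak limit; then the Brezis--Lieb lemma applied to $f_m$ in $L_p$ and to $h*f_m$ in $L_r$ gives, with $a=\|f\|_p^p$,
$$
\|K_h\|^r\;\leq\;\|K_h\|^r\left(a^{r/p}+(1-a)^{r/p}\right),
$$
and since $r>p$ the right-hand factor is strictly less than $1$ for $0<a<1$, forcing $a=1$ and hence $\|h*f\|_r=\|K_h\|$. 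Your phrase ``the defects must themselves saturate the inequality'' gestures at this but does not identify the strict superadditivity coming from $r/p>1$, which is the actual contradiction; and the sphere detour you propose in its place is simply not available off the diagonal. As written, the proof does not close for general exponents.
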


Here $h\notin L_q$, but $h\in L_{q,\infty}$.
One is tempted to conjecture that the condition 
$k\in L_{q,\infty}$ is always sufficient for the existence
of a maximizer. However this is wrong as was pointed out already by Lieb \cite[p.~352]{Lieb1983}. 

We prove the existence result for kernels from a slightly more narrow class $L_{q,\infty,0}\subsetneq L_{q,\infty}$
defined in Sec.~\ref{sec:results}.
It is Theorem~\ref{thm:kweak}.
We obtain it as a consequence of Theorem~\ref{thm:kabstract}, where the assumptions on the kernel have more abstract form and are not very convenient for immediate applications.
In its turn, Theorem~\ref{thm:kweak} yields 
Theorem~\ref{thm:klorentz}, where the sufficient condition is simply $k\in L_{q,s}$, $q\leq s<\infty$, i.e.\ the kernel is assumed to belong to a Lorentz space between the strong $L_q$ and the weak $L_q$. The paper's title refers precisely to the latter result for in that case the class of suitable kernels is most easily understood. 
 
In comparison with \cite{KS2019}, the novelty in this paper is primarily in formulations. For the proof of the base result, Theorem~\ref{thm:kabstract}, we have to make only modest adjustments of some lemmas from \cite{KS2019}.

To be clear, our result does not imply Lieb's. The existence of an extremizer in the Hardy-Littlewood-Sobolev inequality is due to the dilational symmetry of the kernel;
it does sustain truncations of the kernel.

\section{Statement of results}
\label{sec:results}

\begin{theorem}
\label{thm:kabstract}
Let $1<p<r<\infty$ and let $p,q,r$ be related by \eqref{Y}.
Suppose that the distribution $k\in\cnv{p}{r}$ has the following approximation property. For any $\eps>0$,
there exists a measurable function $k_\eps$ such that

\smallskip
(i) $k_\eps$ has finite support;

\smallskip
(ii) $k_\eps\in L_\infty$;

\smallskip
(iii) $\;\|K_{k-k_\eps}\|_{L_p\to L_r}\leq \eps$.

\smallskip
Then the operator $K_k:\;L_p\to L_r$ has a maximizer.
\end{theorem}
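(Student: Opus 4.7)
The strategy is to adapt the concentration–compactness/Brezis–Lieb approach used in the proof of Theorem~\ref{athm:KS} in \cite{KS2019}, with the approximating kernels $k_\eps$ playing the role that $L_q$-integrability of $k$ did there. Let $M=\|K_k\|$ and fix a maximizing sequence $\{f_n\}\subset L_p$ with $\|f_n\|_p=1$ and $\|k*f_n\|_r\to M$. The aim is to translate the $f_n$ so that a weakly convergent subsequence $f_n\rightharpoonup f$ has $f$ as a maximizer.

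\emph{Step 1 (non-vanishing).} Split $K_k=K_{k_\eps}+K_{k-k_\eps}$ with $\eps<M/2$. Since $k_\eps$ is bounded with support in some ball $B_{R_\eps}$, H\"older's inequality yields the local pointwise bound
\[
|K_{k_\eps}f_n(x)|\leq \|k_\eps\|_\infty\,|B_{R_\eps}|^{1/p'}\Bigl(\int_{B_{R_\eps}(x)}|f_n|^p\Bigr)^{1/p},
\]
while $k_\eps\in L_1$ gives via Young's inequality the uniform bound $\|K_{k_\eps}f_n\|_p\leq \|k_\eps\|_1$. If the sequence were vanishing, i.e.\ $\sup_x \int_{B_R(x)}|f_n|^p\to 0$ for every $R$, then $\|K_{k_\eps}f_n\|_\infty\to 0$, and interpolation between $L_\infty$ and $L_p$ (using $p<r<\infty$) would give $\|K_{k_\eps}f_n\|_r\to 0$, whence $\|k*f_n\|_r\leq\eps+o(1)<M$, a contradiction. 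A version of Lions' concentration–compactness dichotomy therefore supplies translations $y_n\in\RR^n$ and fixed $R_0,\delta>0$ with $\int_{B_{R_0}(0)}|f_n(\cdot-y_n)|^p\geq\delta$; replace $f_n$ by its translate.

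\emph{Step 2 (a.e.\ convergence of $K_kf_n$).} Pass to a subsequence along which $f_n\rightharpoonup f$ in $L_p$ and $f_n\to f$ a.e.; Step~1 guarantees $f\neq 0$. For each fixed $\eps>0$, weak convergence tested against the $L_{p'}$-function $y\mapsto k_\eps(x-y)$ gives $K_{k_\eps}(f_n-f)(x)\to 0$ pointwise, while $\|K_{k-k_\eps}(f_n-f)\|_r\leq 2\eps$. Taking $\eps_j\to 0$, extracting successive subsequences for which $K_{k-k_{\eps_j}}(f_n-f)\to 0$ a.e., and diagonalizing yields a further subsequence with $K_kf_n\to K_kf$ a.e.

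\emph{Step 3 (Brezis–Lieb closure).} The Brezis–Lieb lemma then gives
\[
\|f_n\|_p^p=\|f\|_p^p+\|f_n-f\|_p^p+o(1),\qquad \|K_kf_n\|_r^r=\|K_kf\|_r^r+\|K_k(f_n-f)\|_r^r+o(1).
\]
Setting $a=\|f\|_p^p\in(0,1]$ and using $\|K_kg\|_r\leq M\|g\|_p$, one obtains $M^r\leq M^r\bigl(a^{r/p}+(1-a)^{r/p}\bigr)$, which forces $a=1$ since $r/p>1$. Thus $\|f\|_p=1$ and $\|K_kf\|_r=M$, so $f$ is a maximizer. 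I expect Step~1 to be the main obstacle: upgrading the local control furnished by conditions~(i) and~(ii) to $L_r$-smallness in the vanishing scenario requires the $L_\infty$–$L_p$ interpolation above, and it is precisely here that the two approximation conditions jointly substitute for the stronger hypothesis $k\in L_q$ used in \cite{KS2019}.
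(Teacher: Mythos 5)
Your kernel decomposition $K_k=K_{k_\eps}+K_{k-k_\eps}$, with the core handled by H\"older/Young and the remainder by condition (iii), is exactly the mechanism the paper uses; but the concentration--compactness/Brezis--Lieb scaffolding you hang it on has two genuine gaps, both at points where bare $L_p$ behaves worse than $W^{1,p}$. First, Step~1 does not give $f\neq 0$: the bound $\int_{B_{R_0}(0)}|f_n(\cdot-y_n)|^p\geq\delta$ is perfectly compatible with the translated sequence converging weakly to zero, either by concentration (take $f_n$ proportional to $n^{1/p}$ times the indicator of $B_{1/n}$) or by oscillation ($f_n(x)=c\sin(nx)$ on an interval). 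In Lions' scheme non-vanishing upgrades to a nonzero weak limit only through a compact local embedding, which is absent here. Second, the Brezis--Lieb identity $\|f_n\|_p^p=\|f\|_p^p+\|f_n-f\|_p^p+o(1)$ requires $f_n\to f$ almost everywhere, and your ``pass to a subsequence along which $f_n\rightharpoonup f$ and $f_n\to f$ a.e.''\ is not available: a bounded sequence in $L_p$ need not have any a.e.\ convergent subsequence (again $\sin(nx)$). Without that identity the best you can say is $\limsup\|f_n-f\|_p\leq 2$, and the closing inequality of Step~3 becomes vacuous. (A lesser point: for fixed $j$ the functions $K_{k-k_{\eps_j}}(f_n-f)$ do not tend to zero a.e.\ as $n\to\infty$ --- only their $L_r$ norms are uniformly small; Step~2 can be repaired via convergence in measure on bounded sets, so the a.e.\ convergence of $K_kf_n$ is salvageable.)

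The paper avoids both difficulties by following the tightness machinery of \cite{KS2019} rather than Brezis--Lieb. It first shows (substitute for Lemma~3.7) that an $\eps_1$-maximizer of $K_k$ is an $O(\eps_1)$-maximizer of the bounded, compactly supported core $K_{k_\eps}$, which yields explicit bounds on the $\delta$-diameters and hence \emph{relative tightness} of any maximizing sequence --- a much stronger statement than non-vanishing, since it says all but $\delta$ of the mass stays in a fixed slab, ruling out dichotomy at the level of $f_n$ itself. After translation the sequence is tight; then the nontriviality and optimality of the weak limit follow from two facts: $K_{k_\eps}$ sends weak $L_p$-convergence to $L_r$-convergence on bounded sets (substitute for Lemma~4.1, essentially your Step~2), and $K_k$ maps tight sequences to tight sequences in $L_r$ (substitute for Lemma~4.3). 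Together these give $K_kf_n\to K_kf$ \emph{strongly} in $L_r$, so $\|K_kf\|_r=\|K_k\|$ while $\|f\|_p\leq 1$ by weak lower semicontinuity, forcing $f$ to be a maximizer --- no pointwise convergence of $f_n$ and no Brezis--Lieb splitting are ever needed. If you want to keep your outline, the missing ingredient is precisely an analog of the tightness-preservation lemma; as written, the proof does not close.
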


Note that (i) and (ii) imply $k_\eps\in L_q$, hence
$k_\eps\in L_p^r$, so $k-k_\eps\in L_p^r$ and the condition
(iii) is meaningful.

Next we exhibit some regular class of measurable functions $k\in L_p^r$ for which the operator $K_k$ has a maximizer. The word {\em regular}%
\footnote{by analogy with definition of a regular integral operator}
means that the function $k$ has the described property if and only if the function $|k|$ has has the property.  

Recall the definitions of the weak $L_q$ space $L_{q,\infty}$ and the  Lorentz spaces $L_{q,s}$.

Given a measurable function $f$ defined on $\RR^n$,
its {\em distribution function}\ is
$$
 \df_f(\lambda)=\left|\{x\mid |f(x)|>\lambda \}\right|,
$$ 
where $|\Omega|$ denotes the Lebesgue measure of the set $\Omega$. 

The {\em decreasing rearrangement}\ of $f$ is the function 
$$
 f^*(t)=\inf\{\lambda>0\mid \df_f(\lambda)\leq t\}.
$$
defined on $[0,+\infty)$.

Put
$$
 \|f\|_{q,\infty}=
\sup_{t>0}t^{1/q}\,f^*(t)
$$
and
$$
 \|f\|_{q,s}=
 \left(\int_0^\infty 
\left(t^{1/q}\,f^*(t)\right)^{s}\frac{dt}{t}\right)^{1/s}
$$
if $0<s<\infty$.

The Lorentz space $L_{q,s}$, $0<s\leq\infty$. consists of measurable functions for which $\|f\|_{q,s}<\infty$.

It is known that $s>s'\;\Rightarrow\;L_{q,s}\supset L_{q,s'}$, see e.g. \cite[Sec.~1.4.2]{GrafakosCFA}.

Also, $L_{q,q}=L_q$. The largest space (for the fixed $q$)
$L_{q,\infty}$ is called the {\em weak $L_q$ space}.
We will now define its special subspace.

\begin{defn}
The space $L_{q,\infty,0}$ is the subspace of
$L_{q,\infty}$ that consists of functions $f$ such that
$$
 \lim_{t\to 0^+} t^{1/q} f^*(t)=
\lim_{t\to \infty} t^{1/q} f^*(t)=0.
$$ 
\end{defn}

It is easy to see that the equivalent condition is
$$
 \lim_{\lambda\to 0^+}\lambda^q\,\df_f(\lambda)=
 \lim_{\lambda\to \infty}\lambda^q\,\df_f(\lambda)=0.
$$ 

\begin{theorem}
\label{thm:kweak}
If $p,q,r$ are related by \eqref{Y}, $1<p<r<\infty$,
 and $k\in L_{q,\infty,0}$,
then the operator $K_k:\;L_p\to L_r$ has a maximizer.
\end{theorem}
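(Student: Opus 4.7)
The plan is to derive Theorem~\ref{thm:kweak} directly from Theorem~\ref{thm:kabstract}: for each $\eps>0$ I will produce an approximant $k_\eps$ of $k$ that is bounded, has support of finite measure, and satisfies $\|K_{k-k_\eps}\|_{L_p\to L_r}\leq\eps$. Given $k\in L_{q,\infty,0}$, the natural construction is a two-sided truncation---first trim the range so that the retained values lie in $(a,b]$ with $a$ small and $b$ large, then confine the spatial support to a large ball.

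Concretely, for parameters $0<a<b<\infty$ and $R>0$ to be chosen, set
$$k_\eps(x)=k(x)\,\mathbf{1}_{\{a<|k(x)|\leq b\}}\,\mathbf{1}_{\{|x|\leq R\}},$$
which visibly satisfies (i) and (ii). I split the difference as $k-k_\eps=g_1+g_2+g_3$ with
$$g_1=k\,\mathbf{1}_{\{|k|>b\}},\qquad g_2=k\,\mathbf{1}_{\{|k|\leq a\}},\qquad g_3=k\,\mathbf{1}_{\{a<|k|\leq b,\ |x|>R\}},$$
and control each piece through the HLS bound $\|K_g\|_{L_p\to L_r}\leq C\|g\|_{q,\infty}$ quoted in Section~\ref{sec:intro}. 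A direct distribution-function calculation yields $\df_{g_1}(\lambda)=\df_k(\max(\lambda,b))$, whence $\|g_1\|_{q,\infty}^q=\sup_{\lambda\geq b}\lambda^q\df_k(\lambda)$; and $\df_{g_2}(\lambda)=\df_k(\lambda)-\df_k(a)$ for $\lambda<a$ and vanishes for $\lambda\geq a$, whence $\|g_2\|_{q,\infty}^q\leq\sup_{0<\lambda<a}\lambda^q\df_k(\lambda)$. The equivalent reformulation of the $L_{q,\infty,0}$ condition in terms of $\lambda^q\df_k(\lambda)\to 0$ as $\lambda\to 0^+$ and as $\lambda\to\infty$ is tailor-made to drive both suprema to $0$ as $b\to\infty$ and $a\to 0^+$.

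Once $a,b$ are fixed, $g_3$ is bounded by $b$ and supported in $\{|k|>a\}\cap\{|x|>R\}$, a set whose measure is at most $\df_k(a)<\infty$ and shrinks to $0$ as $R\to\infty$. Hence $\|g_3\|_q\to 0$, and since the weak $L_q$ quasi-norm is dominated by the $L_q$ norm, $\|g_3\|_{q,\infty}\to 0$ too. Choosing first $a$ small and $b$ large to tame $g_1,g_2$, then $R$ large to tame $g_3$, makes $\|K_{k-k_\eps}\|_{L_p\to L_r}\leq C(\|g_1\|_{q,\infty}+\|g_2\|_{q,\infty}+\|g_3\|_{q,\infty})\leq\eps$, so Theorem~\ref{thm:kabstract} delivers a maximizer. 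I expect no serious obstacle here: all substantive analysis has been packed into Theorem~\ref{thm:kabstract} and the HLS-type operator bound, leaving only the routine distribution-function bookkeeping to be verified.
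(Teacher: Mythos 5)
Your proposal is correct and follows essentially the same route as the paper: a three-part decomposition of $k-k_\eps$ into the high-value part, the low-value part, and the far-away part of the bounded middle layer, with the first two controlled by the weak-type (HLS) operator bound via the $\lambda^q\df_k(\lambda)\to 0$ reformulation of the $L_{q,\infty,0}$ condition, and the third by its $L_q$ norm. The only cosmetic difference is that the paper invokes Young's inequality directly for the spatial tail where you pass through $\|g_3\|_{q,\infty}\leq\|g_3\|_q$; your bookkeeping with the distribution functions checks out.
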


We will show that the classical Lorentz spaces
$L_{q,s}$, $s<\infty$, are contained in $L_{q,\infty,0}$
and thus obtain 

\begin{theorem}
\label{thm:klorentz}
If $p,q,r$ are related by \eqref{Y}, $1<p<r<\infty$,
 and $k\in L_{q,s}$ where $q\leq s<\infty$,
then the operator $K_k:\;L_p\to L_r$ has a maximizer.
\end{theorem}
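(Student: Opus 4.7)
The plan is to deduce Theorem~\ref{thm:klorentz} as an immediate consequence of Theorem~\ref{thm:kweak}, by establishing the inclusion $L_{q,s}\subset L_{q,\infty,0}$ for every $0<s<\infty$. With this inclusion in hand, any $k\in L_{q,s}$ lies in $L_{q,\infty,0}$, so Theorem~\ref{thm:kweak} directly produces a maximizer of $K_k:\,L_p\to L_r$.

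For the inclusion, fix $f\in L_{q,s}$ with $s<\infty$, so that
$$
 \|f\|_{q,s}^s \;=\; \int_0^\infty \bigl(u^{1/q}f^*(u)\bigr)^s\,\frac{du}{u} \;<\;\infty.
$$
I need to check that $t^{1/q}f^*(t)\to 0$ both as $t\to 0^+$ and as $t\to\infty$. The estimate rests on the monotonicity of the decreasing rearrangement: for $u\in[t/2,t]$ one has $f^*(u)\geq f^*(t)$, hence
$$
 \int_{t/2}^{t}\bigl(u^{1/q}f^*(u)\bigr)^s\,\frac{du}{u}
 \;\geq\; f^*(t)^s\int_{t/2}^{t} u^{s/q-1}\,du
 \;=\; c_{q,s}\,\bigl(t^{1/q}f^*(t)\bigr)^s,
$$
where $c_{q,s}=\frac{q}{s}(1-2^{-s/q})>0$. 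Since the total integral converges, its tails vanish as $t\to 0^+$ and as $t\to\infty$; in particular the intermediate slice $\int_{t/2}^{t}$ does, which forces $t^{1/q}f^*(t)\to 0$ at both endpoints and places $f$ in $L_{q,\infty,0}$. (As a byproduct of the same estimate, $\sup_t t^{1/q}f^*(t)\leq c_{q,s}^{-1/s}\|f\|_{q,s}<\infty$, so the inclusion $L_{q,s}\subset L_{q,\infty}$ also comes for free.)

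A remark on the hypothesis $q\leq s$: the inclusion just sketched holds for every $s<\infty$, so this restriction is not needed for the argument itself. It merely rules out the regime $s<q$, which by the standard nesting $L_{q,s}\subset L_{q,q}=L_q$ is already covered by Theorem~\ref{athm:KS}. Consequently there is no substantive obstacle to this proof: all the analytic depth sits in Theorem~\ref{thm:kweak}, while the passage from weak-type classes to concrete Lorentz spaces reduces to the one-line monotonicity estimate above.
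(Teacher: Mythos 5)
Your proposal is correct and follows essentially the same route as the paper: both reduce Theorem~\ref{thm:klorentz} to Theorem~\ref{thm:kweak} via the inclusion $L_{q,s}\subset L_{q,\infty,0}$, proved by bounding $\bigl(t^{1/q}f^*(t)\bigr)^s$ by a constant times a dyadic slice of the convergent integral $\int_0^\infty\bigl(u^{1/q}f^*(u)\bigr)^s\,du/u$ using monotonicity of $f^*$ (your constant $c_{q,s}=\frac{q}{s}(1-2^{-s/q})$ is exactly the paper's). Your observation that the hypothesis $q\leq s$ is not needed for the inclusion, and that the case $s<q$ is anyway subsumed by $L_{q,s}\subset L_q$ and Theorem~\ref{athm:KS}, is also accurate.
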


\section{Proofs}
\label{sec:proofs}

\begin{proof}[Proof of Theorem~\ref{thm:kabstract}]
The examination of the proof of Theorem~1 in \cite{KS2019}
shows that the assumption $k\in L_q$
is confined to following lemmas: \textbf{3.7} 
\textbf{4.1}, 
and \textbf{4.2}--\textbf{4.3}.
We will state and prove appropriate modifications of those lemmas in sections (a) to (c) below.
As a matter of fact, the proofs will go essentially along the same lines as in \cite{KS2019}. The difference, in essence, can be described as follows. 
Where ``tails'' are cut from the kernel obtain a core which is bounded and has finite support,
Young's inequality was invoked in the preceding proofs to estimate the operator norms of the corresponding ``small perturbations'' on the spot. Presently for the same purpose we can just refer to the condition (iii) of Theorem~1.  

\subsubsection*{(a) Tightness of a maximizing sequence}

Recall the definition of $\delta$-diameter of a function $f\in L_p(\RR^n)$ in the direction $v\in\RR^n$, $\|v\|=1$
(\cite{KS2019}, Definition~2.2):
$$
 D^p_{\delta,v}(f)=\inf_{b>a}\left\{b-a\left|
\; \int_{a<(x,v)<b} |f(x)|^p\,dx\geq \|f\|_p^p-\delta
\right.\right\}.
$$ 

\begin{lemma} {\rm (Substitute for Lemma 3.7 in \cite{KS2019})}
\label{lem:new3-7}

\noindent
Put $N=\|K_k\|_{L_p\to L_r}$. 
Under the assumptions of Theorem~\ref{thm:kabstract}, suppose
$\eps\in (0,N/3)$ is given and
$k_\eps(x)=0$ for $|x|>R$ (such an $R$ exists by condition (i)). 
Let $\eps_1=\eps/N$
and $f\in L_p$ be any $\eps_1$-maximizer of the operator $K_k$, i.e. $\|f\|_p=1$ and $\|K_k f\|_r\geq N(1-\eps_1)$.  
Then for 
$$
\delta=
\frac{6\eps_1}{1-2^{1-r/p}}
$$ 
and any unit vector $v\in\RR^n$
we have
$$
 D^p_{\delta,v}(f)\leq 8R\,\eps_1^{-p/r}.
$$
\end{lemma}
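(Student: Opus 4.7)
The plan is to argue by contradiction. Suppose some unit vector $v$ satisfies $D^p_{\delta,v}(f) > D := 8R\eps_1^{-p/r}$. I would decompose $f = f_1 + f_2 + f_3$ along direction $v$ into a left piece, a right piece, and a middle piece supported in a slab of width $2R$, chosen so that (a) $\operatorname{supp}(f_1)$ and $\operatorname{supp}(f_2)$ are separated by that slab, (b) the left-right split $(\|f_1\|_p^p,\|f_2\|_p^p)$ is roughly balanced, and (c) the middle mass $\|f_3\|_p^p$ is small, of order $\eps_1^{p/r}$.

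The payoff of such a decomposition is that, since $k_\eps$ is supported in the ball of radius $R$, the convolutions $K_{k_\eps} f_1$ and $K_{k_\eps} f_2$ live in disjoint half-spaces along $v$, so
\[
\|K_{k_\eps}(f_1+f_2)\|_r^r = \|K_{k_\eps} f_1\|_r^r + \|K_{k_\eps} f_2\|_r^r.
\]
Setting $a = \|f_1\|_p^p$, $b = \|f_2\|_p^p$, the bound $\|K_{k_\eps}\|_{L_p\to L_r}\leq \|K_k\|+\|K_{k-k_\eps}\|\leq N+\eps$ (via condition (iii)) applied to each piece, combined with the strict convexity of $t\mapsto t^{r/p}$ for $r>p$---which for a balanced split gives $a^{r/p}+b^{r/p}\leq 2^{1-r/p}(a+b)^{r/p}$ with factor $2^{1-r/p}<1$---delivers an upper bound on $\|K_{k_\eps}(f_1+f_2)\|_r^r$ strictly smaller than the trivial $(N+\eps)^r(a+b)^{r/p}$. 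The near-maximality of $f$ together with condition (iii) supply the matching lower bound $\|K_{k_\eps} f\|_r \geq \|K_k f\|_r - \eps \geq N(1-2\eps_1)$ (using $\eps = N\eps_1 < N/3$). Combining these via the triangle inequality $\|K_{k_\eps} f\|_r \leq \|K_{k_\eps}(f_1+f_2)\|_r + \|K_{k_\eps} f_3\|_r$ and the crude bound $\|K_{k_\eps} f_3\|_r \leq (N+\eps)\|f_3\|_p$ (of order $N\eps_1^{1/r}$) should yield a quantitative contradiction precisely when $\delta > 6\eps_1/(1-2^{1-r/p})$, as required.

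The hard part is producing the decomposition satisfying (a)--(c) simultaneously. The hypothesis gives that every slab of width $D$ misses at least $\delta$ of the $L^p$-mass. Any such slab contains $D/(2R) = 4\eps_1^{-p/r}$ disjoint sub-slabs of width $2R$; by pigeonhole (or an averaging argument) one of them has mass at most $\eps_1^{p/r}/4$, giving a candidate middle slab. The delicate point is placing this thin sub-slab so that the induced left and right pieces are balanced rather than lopsided---otherwise the saving $2^{1-r/p}$ is not engaged. I would handle this by running the averaging argument over shifts $t_0$ constrained to a neighborhood of a median of $c(t) = \int_{(x,v)<t}|f|^p\,dx$: the identity $\int g(t_0)\,dt_0 = 2R$ (where $g(t_0)$ is the mass of $|f|^p$ in the slab $[t_0-R,t_0+R]$) combined with the large-diameter hypothesis forces some $t_0$ near the median to satisfy $g(t_0) \leq \eps_1^{p/r}/4$. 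The specific constants $6$ and $8$ in the lemma are engineered so that the order-$\eps_1^{1/r}$ loss from $f_3$ is absorbed within the gap $(1-2^{1-r/p})\delta = 6\eps_1$.
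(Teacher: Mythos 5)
Your opening moves coincide exactly with the paper's: use condition (iii) to write $K_k=K_{k_\eps}+B$ with $\|B\|\le\eps=N\eps_1$, deduce $\|K_{k_\eps}\|\le N(1+\eps_1)$ and $\|K_{k_\eps}f\|_r\ge N(1-2\eps_1)$, so that $f$ is a near-maximizer (with parameter $\eps_2=3\eps_1/(1+\eps_1)<3\eps_1$) of convolution with the bounded, compactly supported kernel $k_\eps$. That transfer is the only genuinely new content of this lemma relative to \cite{KS2019}, and you have it right. But at that point the paper stops: it invokes Lemma~3.6 of \cite{KS2019} as a black box with $\tau=\eps_2$, $\gamma=r/p$, $a=R$, reads off $\delta=2\eps_2/(1-2^{1-r/p})$ and $L=8R\,\eps_2^{-p/r}$, and then only enlarges $\delta$ and coarsens $L$ to the stated values. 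You instead undertake to reprove that lemma from scratch, so your argument must stand on its own.

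There it has a genuine gap, precisely at the point you call delicate. The two requirements on the middle slab --- small $L_p$-mass (obtained by averaging over a window of width $D$, giving mass about $2R/D=\eps_1^{p/r}/4$) and a balanced left/right split (needed to engage the factor $2^{1-r/p}<1$) --- pull in opposite directions, and the hypothesis $D^p_{\delta,v}(f)>D$ with $\delta\sim\eps_1$ small is too weak to force genuine balance: it only says that more than $\delta$ of the mass escapes every width-$D$ slab, and that mass may all sit on one side (picture $1-\delta$ of the mass in a unit interval and $\delta/2$ at each of $\pm T$ with $T$ huge). The most one can then extract is $\min(a,b)\gtrsim\delta$, i.e.\ $\theta=a/(a+b)$ within $O(\eps_1)$ of an endpoint, where $\theta^{r/p}+(1-\theta)^{r/p}=1-O(\eps_1)$, nowhere near $2^{1-r/p}$. (Note also that your displayed inequality is backwards: convexity gives $a^{r/p}+b^{r/p}\ge 2^{1-r/p}(a+b)^{r/p}$, with equality only at $a=b$; for an approximately balanced split you must quantify how much larger than $2^{1-r/p}$ the ratio can be, and that is exactly the missing work.) With only an $O(\eps_1)$ saving the final accounting cannot close, because your own estimate of the middle-piece loss is $\|K_{k_\eps}f_3\|_r=O(N\eps_1^{1/r})$, and since $1/r<1$ this \emph{dominates} $6\eps_1$ for small $\eps_1$ rather than being ``absorbed within the gap $(1-2^{1-r/p})\delta=6\eps_1$'' as you assert. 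Resolving this tension and producing the constants $6$ and $8$ is the actual content of Lemma~3.6 of \cite{KS2019}; either carry out its proof in full or, as the paper does, cite it after the reduction step you have already established.
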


\begin{remark}
The condition (ii) in the formulation of Theorem~\ref{thm:kabstract} is not needed for this Lemma.
\end{remark}

\begin{proof}
We have the decomposition $K_k=A+B$, where $A=K_{k_\eps}$
is the operator of convolution with bounded function supported in the ball $|x|\leq R$, while $\|B\|\leq\eps$ (condition (iii)). 


Since $f$ is an $\eps$-maximizer for $K_k$, we have
$$
 \|Af\|_r\geq \|K_k f\|_r-\|B f\|_r
\geq \|K_k\|(1-\eps_1)-\eps.
$$
On the other hand,  $\|A\|\leq \|K_k\|+\eps$.
Hence
$$
\|Af\|_r\geq \|A\|(1-\eps_2), 
$$
where
$$
 1-\eps_2=\left(1-\eps_1-\frac{\eps}{N}\right)
\left(1+\frac{\eps}{N}\right)^{-1}.
$$
Thus $f$ is an $\eps_2$-maximizer for $A$.

By the choice of $\eps_1$ we obtain
$$
 \eps_2=\frac{3\eps/N}{1+\eps/N}<3\eps_1.
$$

By Lemma~3.6 of \cite{KS2019}, there exist $\delta>0$
and $L>0$ 
such that for any unit vector $v\in\RR^n$
$$
 D^p_{\delta,v}(f)\leq L.
$$
The expressions for $\delta$ and $L$ are available in explicit form. The cited Lemma~3.6 is applied with parameters $\tau=\eps_2$, $\gamma=r/p>1$, $a=R$.
We take 
$$
 \delta=2\frac{\tau}{1-2^{1-\gamma}}=\frac{2\eps_2}{1-2^{1-r/p}};
$$
then the parameter $\kappa$ in Lemma~3.6 is $\kappa=2\tau$
and a suitable bound $L$ for $D^p_{\delta,v}(f)$ can be taken in the form
$$
 L=8a(\kappa-\tau)^{-1/\gamma}=8R\eps_2^{-p/r}.
$$
The same upper bound is valid for $D^p_{\delta',v}$ for
any $\delta'\geq \delta$. In particular, 
we may take
$\delta'=6\eps_1(1-2^{1-r/p})^{-1}$.

Renaming $\delta'$ into $\delta$ and replacing $\eps_2$
in the above expression for $L$ by $\eps_1$,
which only makes the upper bound coarser (larger) 
since $3/(1+\eps_1)>1$,
we come to the result as stated. 
\end{proof}

A sequence of functions $f_j$ with $\|f_j\|_p=1$ is {\em relatively tight} (Definition 2.5 in \cite{KS2019})
if for any $\delta>0$ there holds
$$
 \sup_j \sup_{\|v\|=1} D^p_{\delta,v}(f_j)<\infty.
$$

As a simple consequence of Lemma~\ref{lem:new3-7}, we deduce the analog of Corollary~3.2 of \cite{KS2019}: 

{\em If an operator $K_k\in\cnv{p}{r}$ satisfies the conditions of Theorem~\ref{thm:kabstract}, then any
maximizing sequence for $K_k$ is relatively tight.}

\subsubsection*{(b) Compactness lemma}

This lemma is used to assert that the operator $K_k$
maps a weakly convergent in $L_p$ sequence to a sequence
convergent in $L_r$-norm on any bounded set in $\RR^n$.

\begin{lemma}  {\rm (Substitute for Lemma 4.1 and Corollary~4.1 in \cite{KS2019})}
\label{lem:new4-1}

Under the assumptions of Theorem~\ref{thm:kabstract}, suppose that the sequence $(f_n)$ with $\|f_j\|=1$ weakly converges in $L_p$ to $f$. Then for any function $\chi\in L_{r}\cap L_\infty(\RR^n)$ we have $\|\chi(x)\cdot(K_k f_j-K_k f)\|_r\to 0$. 
\end{lemma}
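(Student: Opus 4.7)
The plan is to use the decomposition $k = k_\eps + (k-k_\eps)$ supplied by the approximation property of Theorem~\ref{thm:kabstract} and handle the two pieces separately: the piece $K_{k_\eps}$ behaves like a locally compact smoothing operator because $k_\eps$ is bounded with compact support, while the remainder $K_{k-k_\eps}$ is absorbed in operator norm via condition (iii). This is exactly the substitution for Young's inequality that the authors flag in the preamble to the proof of Theorem~\ref{thm:kabstract}: whenever the old proof used $k-k_\eps\in L_q$ to bound $\|K_{k-k_\eps}\|$ via Young, we invoke (iii) instead.

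Fix $\eps>0$ and let $k_\eps$ satisfy (i)--(iii), with $\mathrm{supp}(k_\eps)\subseteq\{|x|\leq R\}$. Boundedness of weakly convergent sequences together with weak lower semicontinuity of the $L_p$-norm gives $\|f\|_p\leq 1$, so $M_0:=\sup_j\|f_j-f\|_p\leq 2$. Condition (iii) combined with the pointwise bound $|\chi g|\leq\|\chi\|_\infty|g|$ yields
\[
\|\chi\cdot K_{k-k_\eps}(f_j-f)\|_r \leq \|\chi\|_\infty\cdot\eps\cdot M_0 \leq 2\eps\,\|\chi\|_\infty.
\]
For the main term set $g_j:=K_{k_\eps}(f_j-f)$. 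For each $x\in\RR^n$ the kernel slice $y\mapsto k_\eps(x-y)$ is bounded and supported in $x+B_R$, hence lies in $L_{p'}(\RR^n)$; therefore the weak convergence $f_j-f\rightharpoonup 0$ in $L_p$ forces $g_j(x)\to 0$ pointwise, while H\"older's inequality yields the uniform bound
\[
\sup_{j,\,x}|g_j(x)|\leq \|k_\eps\|_\infty\,|B_R|^{1/p'}\,M_0 =: C_\eps.
\]

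To convert pointwise-plus-uniformly-bounded into $L_r$ convergence weighted by $\chi$, split $\chi=\chi\mathbf{1}_{B_M}+\chi\mathbf{1}_{B_M^c}$. Since $\chi\in L_r$, given $\eta>0$ one can choose $M$ so large that $\|\chi\mathbf{1}_{B_M^c}\|_r<\eta$, whence $\|\chi\mathbf{1}_{B_M^c}g_j\|_r\leq C_\eps\,\eta$. On the bounded region $B_M$ the integrand $|\chi g_j|^r$ is dominated by the integrable function $(\|\chi\|_\infty C_\eps)^r\mathbf{1}_{B_M}$ and tends to $0$ pointwise, so the dominated convergence theorem gives $\|\chi\mathbf{1}_{B_M}g_j\|_r\to 0$ as $j\to\infty$. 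Combining the two estimates,
\[
\limsup_{j\to\infty}\|\chi(K_kf_j-K_kf)\|_r \leq 2\eps\,\|\chi\|_\infty,
\]
and sending $\eps\to 0$ finishes the argument. The only delicate coupling is between the approximation parameter $\eps$ and the sequence index $j$; this is resolved by the iterated-limit structure above, first letting $j\to\infty$ for $\eps$ fixed and then $\eps\to 0$. Beyond that I expect no real obstacle, since the simultaneous pointwise convergence and $L_\infty$ bound for $g_j$ are precisely what conditions (i) and (ii) were designed to deliver, and the tail control uses only $\chi\in L_r$.
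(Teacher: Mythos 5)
Your proposal is correct and follows essentially the same route as the paper: the same decomposition $K_k=K_{k_\eps}+K_{k-k_\eps}$ with the remainder absorbed via condition (iii), and pointwise convergence of $K_{k_\eps}(f_j-f)$ from $k_\eps\in L_{p'}$ upgraded to weighted $L_r$ convergence by dominated convergence. The only cosmetic difference is that the paper dominates the integrand directly by a constant multiple of $|\chi|^r\in L_1$ instead of splitting $\chi$ into a ball and its complement.
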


\begin{proof}
Given $\eps>0$, we want to find $n_0$ such that
$\|\chi(x)\cdot(K_k f_n(x)-K_k f(x))\|_p<\eps$ for all $j\geq j_0$.

Consider the decomposition $k=k_{\eps/3}+(k-k_{\eps/3})$
provided by the assumptions of Theorem~\ref{thm:kabstract}
(with $\eps/3$ in place of $\eps$). Let $K_k=A+B$ be the corresponding decomposition of the operator $K_k$.

Without loss of generality, we may assume that $\|\chi\|_\infty\leq 1$.

The first part of the proof, dealing with operator $A$ (convolution with $k_{\eps/3}$), is the same as in the proof of Lemma~4.1 of \cite{KS2019}.
Since $k_{\eps/3}\in L_1\cap L_\infty\subset L_{p'}$, the sequence $Af_n$ converges pointwise. Moreover,
$\|\chi\cdot Af_j\|_r\leq \|\chi\|_r\cdot\|k_{\eps/3}\|_{p'}
\cdot \|f_j\|_p\|$ is uniformly bounded (w.r.t. $j$),
so by the dominated convergence theorem there exists $j_0$
such that
$$
 \|\chi\cdot A(f_j-f)\|_r<\frac{\eps}{3},
\qquad j\geq j_0.
$$

The final step of the proof differs from that in \cite{KS2019} in that now it refers to
the condition (iii) of Theorem~\ref{thm:kabstract},
that is, $\|B\|\leq \eps/3$. We get 
$$
 \|\chi\cdot B(f_j-f)\|_r<\|B\|(\|f_j\|_p+\|f\|_p)\leq \frac{2\eps}{3}
$$
for any $n$, due to the fact that $\|f\|_p\leq\lim \|f_j\|_p=1$.

We conclude that for $j\geq j_0$
$$
 \|\chi\cdot K_k(f_j-f)\|_r<\eps,
$$
as required.
\end{proof}

\subsubsection*{(c) Preservation of tightness property under the action of $K_k$}

In \cite{KS2019}, Lemma~4.3 is the result that eventually 
applies in the global scheme of the proof. It is derived
as a consequence of Lemma~4.2. Here we will not need an analog of Lemma~4.2 and will derive the required analog
of Lemma~4.3 using that very lemma in the form 
established in \cite{KS2019}. 


\begin{lemma}  {\rm (Substitute for Lemma 4.3 in \cite{KS2019})}
\label{lem:new4-3}
Suppose that $(f_j)$, $\|f_j\|_p=1$, is a tight sequence in $L_p(\RR^n)$.
That is, for any $\delta>0$, there is a cube $Q\in\RR^n$
such $\int_{\RR^n\setminus Q} |f_j|^p \leq \delta$ for all $j$.
If $k\in\cnv{p}{r}$ satisfies the assumptions of Theorem~\ref{thm:kabstract}, then the sequence $g_j=K_k f_j$
is tight in $L_r(\RR^n)$, that is, for any $\delta>0$
there exists a cube $Q$ such that 
$\int_{\RR^n\setminus Q} |g_j|^r \leq \delta$.
\end{lemma}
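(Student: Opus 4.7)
The plan is to exploit the decomposition $k = k_\eps + (k - k_\eps)$ supplied by the hypotheses of Theorem~\ref{thm:kabstract} and reduce to the original Lemma~4.3 of \cite{KS2019}, in which the kernel is assumed to lie in $L_q$. Since conditions (i) and (ii) together make $k_\eps$ bounded with finite support, we have $k_\eps \in L_q$, so convolution with $k_\eps$ falls within the scope of the cited lemma; the residual operator $K_{k - k_\eps}$ has $L_p \to L_r$ norm at most $\eps$ by (iii), and therefore contributes only a controllable amount of $L_r$-mass outside any set.

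Concretely, given $\delta > 0$, I would first choose $\eps > 0$ so small that $2^r \eps^r \leq \delta$, then form the splitting $K_k = A + B$ with $A = K_{k_\eps}$ and $B = K_{k - k_\eps}$. Applying the original Lemma~4.3 of \cite{KS2019} to the $L_q$-kernel $k_\eps$ and the tight sequence $(f_j)$ supplies a cube $Q$ with $\int_{\RR^n \setminus Q} |A f_j|^r \leq \delta/2^r$ for every $j$. Since $\|B f_j\|_r \leq \eps$ uniformly in $j$, the elementary bound $|a+b|^r \leq 2^{r-1}(|a|^r + |b|^r)$ then gives
\[
\int_{\RR^n \setminus Q} |g_j|^r \;\leq\; 2^{r-1}\!\left(\int_{\RR^n \setminus Q} |A f_j|^r + \|B f_j\|_r^r\right) \;\leq\; 2^{r-1}\!\left(\tfrac{\delta}{2^r} + \eps^r\right) \;\leq\; \delta,
\]
which is the desired tightness estimate.

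The only point requiring verification is that Lemma~4.3 of \cite{KS2019} indeed applies to $k_\eps$, which reduces to noting $k_\eps \in L_q$; this is immediate from (i) and (ii). I do not foresee any real obstacle here: the argument is a routine perturbative reduction to the already-established lemma, exactly in the spirit of the global remark in the proof of Theorem~\ref{thm:kabstract} that the $L_q$-assumption of \cite{KS2019} was invoked only to control the compactly supported, bounded core of the kernel.
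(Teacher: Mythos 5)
Your proposal is correct and follows essentially the same route as the paper: the same splitting $K_k = K_{k_\eps} + B$ with $\|B\|\leq\eps$, reduction of the core term to Lemma~4.3 of \cite{KS2019} via $k_\eps\in L_q$, and absorption of the small perturbation. The only cosmetic difference is that you combine the two pieces with the pointwise bound $|a+b|^r\leq 2^{r-1}(|a|^r+|b|^r)$ where the paper uses Minkowski's inequality in $L_r(\RR^n\setminus Q)$; both yield the stated tightness with an appropriate choice of $\eps$.
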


\begin{proof}
Consider the decomposition $K_k=A+B$ provided by the conditions of Theorem~1, where $A=K_{k_\eps}$ and $\|B\|_{L_p\to L_r}\leq\eps$. The value of $\eps$ will be
specified later.

We have
$
 g_j=A f_j+B f_j,
$
where $\|B f_j\|_r\leq\eps$ and $A$ is a convolution operator with kernel that certainly lies in $L_q$,
so that Lemma~4.3 of \cite{KS2019} is applicable to it.

Now, given $\delta>0$, let us choose $\eps=\frac{1}{2}\delta^{1/r}$.
Put $\delta_1=2^{-r}\delta$. By Lemma~4.3 of \cite{KS2019},
there exists a cube $Q$ such that for all $j$
$$
 \int_{\RR^n\setminus Q} |Af_j|^r \leq \delta_1. 
$$
By Minkowski's inequality,
\begin{align}
\int_{\RR^n\setminus Q} |g_j|^r & \leq \left(\|Af_j\|_{L_r(\RR^n\setminus Q)}
+\|Bf_j\|_{L_r(\RR^n\setminus Q)}
\right)^r
\\
&\leq (\delta_1^{1/r}+\eps)^r
=\delta,
\end{align}
as required.
\end{proof}

We have revised all the lemmas that needed revision.
The structure of the proof of Theorem~1 in \cite{KS2019} and all other details stay as is. Thus Theorem~\ref{thm:kabstract} of this paper is proved.
\end{proof}

\begin{proof}[Proof of Theorem~\ref{thm:kweak}]
Given $\eps>0$, we are going to exhibit the function
$k_\eps$ satisfying the conditions (i)--(iii) of Theorem~\ref{thm:kabstract}.

By definition of the class $L_{q,\infty,0}$, there exists
$M>0$ such that 
$$
 \lambda\,\df_k(\lambda)<\eps
$$
for all $\lambda>M$. 

Therefore the function
$$
 u(x)=\begin{cases} 
   k(x)\; \text{if}\; |k(x)|>M,
   \\
   0\; \text{if}\; |k(x)|\leq M
\end{cases}
$$
satisfies the inequality
$$
 \lambda\,\df_u(\lambda)<\eps
$$
for all $\lambda>0$. 

Put $v=k-u$.
Then $|v|_\infty\leq M$. 

Also, $d_v(\lambda)\leq d_k(\lambda)$. Hence, by definition
of the class $L_{q,\infty,0}$, there exists
$\delta>0$ such that 
$$
 \lambda\,\df_w(\lambda)<\eps
$$
for all $0<\lambda<\delta$. 

Therefore the function
$$
 w(x)=\begin{cases} 
   u(x)\; \text{if}\; |u(x)|<\delta,
   \\
   0\; \text{if}\; |u(x)|\geq \delta
\end{cases}
$$
satisfies the inequality
$$
 \lambda\,\df_w(\lambda)<\eps
$$
for all $\lambda>0$. 

By the Young-like form of the Hardy-Littlewood-Sobolev inequality we have
$$
\|K_u\|_{L_p\to L_r}\leq C\eps 
$$
and
$$
\|K_w\|_{L_p\to L_r}\leq C\eps, 
$$
where $C$ depends only on $n$ (the dimension of the space), $p$ and $r$. 

The function $y(x)=v(x)-w(x)$ is bounded: $\|y\|_\infty\leq M$ and has support of finite measure:
$\df_y(0)\leq \df_v(\delta)\leq \|k\|_{q,\infty}\delta^{-q}$. 
Therefore there exists $R>0$ such that
$$
 \int_{|x|>R} |y|^q\,dx <\eps.
$$
Put 
$$ 
 z(x)=\begin{cases} 
   y(x)\; \text{if}\; |x|>R,
   \\
   0\; \text{if}\; |x|\leq R.
\end{cases}
$$
By Young's inequality, $\|K_z\|_{L_p\to L_r}\leq \eps$.

The function 
$$
 \tilde k=y-z=k-(v+w+z)
$$
is bounded, has finite support, and
$$
 K_{k-\tilde k}\leq (2C+1)\eps.
$$
Re-denoting $(2C+1)\eps$ into $\eps$, we obtain $\tilde k=k_\eps$ with all the required properties.
\end{proof}

\begin{proof}[Proof of Theorem~\ref{thm:klorentz}]
We will show that if $s<\infty$, then
$L_{q,s}\subset L_{q,\infty,0}$, hence Theorem~\ref{thm:klorentz} will follow from Theorem~\ref{thm:kweak}.

We assume that $f\in L_{q,s}$, that is, $\int_0^\infty(f^*(t))^s\,t^{s/q-1}\,dt<\infty$.
Given $\eps>0$, there exists $T_\eps>0$ such that
the intergral from $T_\eps$ to $\infty$ is less than $\eps$. 
Suppose that $T\geq 2T_\eps$. Then
$$
 \eps>\int_{T_\eps}^T\left(f^*(t)\right)^s\,t^{s/q-1}\,dt
\geq (f^*(T))^s \,\int_{T_\eps}^T t^{s/q-1}\,dt
\geq C\left(f^*(T) T^{1/q} \right)^s,
$$
where
$$
 C=\frac{q}{s} \left(1-2^{-s/q}\right).
$$
Hence $\limsup_{t\to\infty} f^*(t) t^{1/q} \leq (\eps/ C)^{-1/s}$. Since $\eps$ is arbitrary, we obtain
$\lim_{t\to\infty}  f^*(t) t^{1/q}=0.$

We use a similar (actually simpler) argument to prove that $\lim_{t\to 0^+}  f^*(t) t^{1/q}=0$.
Given $\eps>0$, there exists $T_\eps>0$ such that
$
 \int_0^{T_\eps}(f^*(t))^s\,t^{s/q-1}\,dt<\eps.
$
For any $T\in (0,T_\eps)$ we have
$$
 \eps>\int_0^T\left(f^*(t)\right)^s\,t^{s/q-1}\,dt
\geq (f^*(T))^s \,\int_0^T t^{s/q-1}\,dt
\geq \frac{q}{s}\left(f^*(T) T^{1/q} \right)^s,
$$
whence the desired conclusion follows.
\end{proof}

\end{document}